

\documentclass[ECP]{ejpecp} 




\usepackage[utf8]{inputenc}
\usepackage{enumerate}
\usepackage{bbm}
\usepackage{bigints}
\usepackage{amsthm}
\usepackage{amssymb}
\usepackage{color}
\usepackage{tikz}

\theoremstyle{definition}

\theoremstyle{remark}


\SHORTTITLE{Polynomial convergence to the Yaglom limit for Brownian motion with drift} 

\TITLE{Polynomial rate of convergence to the Yaglom limit for Brownian motion with drift} 



\AUTHORS{%
  William Oçafrain\footnote{Institut de Mathématiques, Université de Neuchâtel, Rue Emile-Argand, Neuchâtel,
Suisse-2000.
    \EMAIL{william.ocafrain@unine.ch}}
}



\KEYWORDS{ Quasi-stationary distribution, Yaglom limit, Brownian motion with drift, rate of convergence, $1$-Wasserstein distance, Bessel-$3$ process} 

\AMSSUBJ{60B10;60J65;37A25} 

\SUBMITTED{} 
\ACCEPTED{} 




\VOLUME{}
\YEAR{}
\PAPERNUM{}
\DOI{}


\ABSTRACT{This paper deals with the rate of convergence in $1$-Wasserstein distance of the marginal law of a Brownian motion with drift conditioned not to have reached $0$ towards the Yaglom limit of the process. In particular it is shown that, for a wide class of initial measures including probability measures with compact support, the Wasserstein distance decays asymptotically as $1/t$. Likewise, this speed of convergence is recovered for the convergence of marginal laws conditioned not to be absorbed up to a horizon time towards the Bessel-$3$ process, when the horizon time tends to infinity.}



\def\E{\mathbb{E}}
\def\P{\mathbb{P}}
\def\R{\mathbb{R}}
\def\Q{\mathbb{Q}}

\def\1{\mathbbm{1}}

\def\cL{\mathcal{L}}

\def\cM{\mathcal{M}}
\def\cB{\mathcal{B}}

\def\cW{\mathcal{W}}
\def\Yaglom{\text{Yaglom}}
\def\Lip{\text{Lip}}
\def\Kolm{\text{Kolm}}


\begin{document}



\section*{Notation}

For a general set $F$, we will use the following notation :

\begin{itemize}
    \item $\cM_1(F)$ : Set of the probability measures defined on $F$.
    \item $\cB(F)$ : Set of the measurable bounded functions defined on $F$.
    \item $\cB_1(F)$ : Set of the measurable bounded functions defined on $F$ such that $||f||_\infty \leq 1$.
    \item For any $\mu \in \cM_1(F)$ and $f \in \cB(F)$, 
    $$\mu(f) := \int_F f(x) \mu(dx).$$
\item For any family of probability measure $(\mu_t)_{t \geq 0}$ and $\mu \in \cM_1(F)$, the notation 
$$\mu_t \underset{t \to \infty}{\overset{\cL}{\longrightarrow}} \mu$$
refers to the weak convergence of $(\mu_t)_{t \geq 0}$ towards $\mu$, that is : for any continuous and bounded measurable function $f$,
$$\mu_t(f) \underset{t \to \infty}{\longrightarrow} \mu(f).$$ 
 
\end{itemize}
For any $x, L \in \R_+ \times (0,+\infty)$, denote by $\Lip_{x,L}(\R_+)$ the set of the $L$-Liptschitz functions defined on $\R_+$ such that $f(0) = x$.
\section{Introduction}

Denote by $(X_t)_{t \geq 0}$ a Brownian motion with constant drift, that is defined as
\begin{equation}
    \label{brownian-drift}
    X_t := X_0 + B_t - rt,~~~~\forall t \geq 0,
\end{equation}
where $X_0$ is a random variable on $\R$, $(B_t)_{t \geq 0}$ is a one-dimensional Brownian motion starting from $0$ independent from $X_0$ and $r > 0$.  Denote by $(\P_x)_{x \in \R_+}$ a family of probability measures such that, for any $x > 0$, $\P_x(X_0 = x) = 1$. Then, for a given $\mu \in \cM_1(\R_+\setminus \{0\})$,  the probability measure $\P_\mu := \int_{(0,+\infty)} \P_x \mu(dx)$ is such that $\P_\mu(X_0 \in \cdot) = \mu$. Denote by $\E_x$ and $\E_\mu$ the associated expectations.  

Denote by $\tau_0$ the hitting time of $(X_t)_{t \geq 0}$ at $0$, i.e.
$$\tau_0 := \inf\{t \geq 0 : X_t = 0\}.$$
This paper will cope with the \textit{quasi-stationarity} for the process $(X_t)_{t \geq 0}$, that is the study of the asymptotic behavior of the Markov process $(X_t)_{t \geq 0}$ conditioned not to reach $0$. The main notion of this theory is the \textit{quasi-stationary distribution} (QSD), which is a probability measure $\alpha$ on $(0,+\infty)$ such that,  for any $t \geq 0$,
\begin{equation*}
    \label{qsd}
    \alpha = \P_{\alpha}(X_t \in \cdot | \tau_0 > t),
\end{equation*}
where, for any probability measure $\mu$ supported $(0,+\infty)$ and $t \geq 0$,
$$\P_\mu(X_t \in \cdot | \tau_0 > t) := \frac{\P_\mu(X_t \in \cdot, \tau_0 > t)}{\P_\mu(\tau_0 > t)}.$$
It is well known (see for example \cite{CMSM} or \cite{MV2012}) that to be a QSD is equivalent to the following property: there exists $\mu \in \cM_1((0,+\infty))$ such that
$$\P_\mu(X_t \in \cdot | \tau_0 > t) \underset{t \to \infty}{\overset{\cL}{\longrightarrow}} \alpha.$$
This paper will more specifically deal with the weak convergence of $\P_\mu(X_t \in \cdot | \tau_0 > t)$ towards the so-called \textit{Yaglom limit}, denoted by $\alpha_\Yaglom$, which is defined as the unique QSD such that, for any $x > 0$,
\begin{equation} \label{qld}\P_x(X_t \in \cdot | \tau_0 > t) \underset{t \to \infty}{\overset{\cL}{\longrightarrow}} \alpha_\Yaglom.\end{equation}
It is well known that such a probability measure exists for the Brownian motion with drift (see further in Section 2). The goal of this paper is more precisely to study the speed of convergence of the conditional probability measure $\P_\mu(X_t \in \cdot | \tau_0 > t)$,  for some initial measures $\mu$, towards the Yaglom limit when $t$ goes to infinity. 

In order to quantify the weak convergence \eqref{qld}, it is possible to use several distances on $\cM_1((0,+\infty))$. One of them is the \textit{total variation distance}, defined as follows : 
\begin{equation}
    \label{tv}
    ||\mu-\nu||_{TV} := \sup_{f \in \cB_1((0,+\infty))} |\mu(f)-\nu(f)|,~~~~\forall \mu, \nu \in \cM_1((0,+\infty)).
\end{equation}
In particular, the convergence towards $0$ of the total variation distance between the conditional probability $\P_\mu(X_t \in \cdot | \tau_0 > t)$ and $\alpha_\Yaglom$ implies the weak convergence \eqref{qld}. 

Another distance which can be used to quantify weak convergences is the \textit{$1$-Wasserstein distance}, defined as 
$$\cW_1(\mu,\nu) := \sup_{f \in \Lip_{1,1}(\R_+)} |\mu(f)-\nu(f)|.$$
For this distance, the decay towards $0$ implies a weak convergence and the convergence of the first moment.

For the most of the absorbed Markov processes, it is usually expected that the distance $||\P_\mu(X_t \in \cdot | \tau_0 > t) - \alpha_\Yaglom||_{TV}$ decreases exponentially fast. Especially, it is well known (see for example \cite{CV2014,CV2017c,V2018}) that some conditions, based on Doeblin-type condition or Lyapunov functions, entail an exponential decay of the total variation distance. However, in our case, the Brownian motion with drift does not satisfy such conditions.

 It even seems that an exponential speed could be too fast for some initial laws and that the expected rate of convergence is rather $1/t$. In particular, it was shown by Polak and Rolski in \cite{PR2012} that the $L^1$ distance between the density function of $\P_x[X_t \in \cdot | \tau_0 > t]$ and the one of the Yaglom limit is equivalent, when $t$ goes to infinity, to $c/t$, where $c > 0$ is a constant independent on the state $x$. In \cite{PV2017}, Palmowski and Vlasiou showed that, for a Lévy process satisfying some assumptions (see (SN) and (SP) in \cite{PV2017}) and taking, as initial measure, the invariant measure of the associated reflected process, the difference in absolute value between the density function of the conditional probability and the one of the Yaglom limit decreases also as $1/t$ when $t$ goes to infinity.  

The aim of this note is to recover this speed of convergence considering a large class of initial measures, improving therefore the result of Polak and Rolski, and using the Wasserstein distance $\cW_1$ to quantify the convergence. In particular, it will be shown that, if the initial measure $\mu$ has a compact support, there exists $c_\mu, C_\mu \in (0,\infty)$ such that one has asymptotically 
\begin{equation}
    \label{encadrement-intro}
    \frac{c_\mu}{t} \leq \cW_1(\P_\mu(X_t \in \cdot | \tau_0 > t),\alpha_\Yaglom)  \leq \frac{C_\mu}{t},
\end{equation}
The asymptotical inequalities \eqref{encadrement-intro} hold actually for a wider class of initial measures which will be spelled out later in Theorem \ref{theorem}, and the same result can be also stated replacing the Wasserstein distance by the total variation distance. 

The Section \ref{qsd-Bmwd} will begin by giving some useful and well-known generalities on the quasi-stationarity for the Brownian motion with drift absorbed at $0$. Then, the results of this note will be more precisely presented in Section \ref{wasserstein}, one of which is an important lemma on the asymptotic property of the Bessel-3 process. Finally, this note ends with the polynomial convergence of the conditional law $\P_\mu[X_{s} \in \cdot | \tau_0 > t]$ towards the marginal law at time $s$ of a Bessel-3 process, when $t$ goes to infinity.  

\section{Preliminaries on the quasi-stationarity for a Brownian motion with drift}
\label{qsd-Bmwd}
The quasi-stationarity for the Brownian motion with drift absorbed at $0$ has been studied by Martinez and San Martin in \cite{MSM1994}. In their paper, the authors showed that there exists an infinity of quasi-stationary distributions, one of which is the Yaglom limit $\alpha_{\Yaglom}$. Moreover, the density function of the Yaglom limit is explicitly given :
$$\alpha_{\Yaglom}(dx) = r^2 xe^{-rx}dx.$$
In another paper (\cite{MPSM98}), Martinez, Picco and San Martin are interested in the domain of attraction of the Yaglom limit, that is the set of the initial laws for which the convergence \eqref{qld} holds. In particular, they showed that, when the initial law $\mu$ admits a density function $\rho$ with respect to the Lebesgue measure, the conditional probability measure $\P_\mu(X_t \in \cdot | \tau_0 > t)$ converges to $\alpha_\Yaglom$ when
\begin{equation}
\label{liminf}
\liminf_{x \to \infty} - \frac{1}{x} \log (\rho(x)) \geq r.\end{equation}
Since $\alpha_\Yaglom$ is a quasi-stationary distribution, it is well known (see \cite{CMSM,MV2012}) that there exists $\lambda_0 > 0$ such that, for any $t \geq 0$,
$$\P_{\alpha_\Yaglom}(\tau_0 > t) = e^{-\lambda_0 t}.$$
For a Brownian motion with drift $r$, one has $$\lambda_0 = \frac{r^2}{2}.$$
Moreover, $\lambda_0$ is an eigenvalue for the infinitesimal generator of $(X_t)_{t \geq 0}$, which is
$$Lf(x) := \frac{1}{2}f''(x) - rf'(x),$$
and one can associate to $\lambda_0$ an eigenfunction $\eta$, which is unique up to a multiplicative constant and proportional to the function $x \mapsto x e^{rx}$. For example, one can choose
$$\eta(x) = \frac{1}{r^2} x e^{rx}.$$
From these definitions, the so-called $Q$-process can be defined as the Markov process whose the semi-group $(Q_t)_{t \geq 0}$ is defined by
\begin{equation}
\label{q-proc}
Q_tf(x) := \frac{e^{\lambda_0 t}}{\eta(x)} \E_x(\eta(X_t)f(X_t)\1_{\tau_0 > t}),~~~~\forall f \text{ measurable}, \forall t \geq 0.\end{equation}
For any positive measure $\mu$ supported on $(0,+\infty)$, one uses the notation
$$\mu Q_t f := \int_{(0,+\infty)} Q_tf(x) \mu(dx).$$
This $Q$-process is actually obtained from a Doob-transform of the sub-Markovian semi-group $P_tf(x) := \E_x(f(X_t)\1_{\tau_0 > t})$. It corresponds to the process conditioned "never" to be absorbed, in the following sense : the family of probability measure $(\Q_x)_{x > 0}$ defined as
$$\Q_x(\Gamma) = \lim_{T \to \infty} \P_x(\Gamma | \tau_0 > T),~~~~\forall ~t \geq 0, \forall~ \Gamma \in \sigma(X_s, 0 \leq s \leq t)$$
is well-defined and, for any $t \geq 0$ and any $f$ measurable,
$$Q_tf(x) = \E^\Q_x(f(X_t)),$$
where $\E^\Q_x$ is the expectation associated to $\Q_x$. 

Denote by $(Y_t)_{t \geq 0}$ the $Q$-process (i.e. $Q_tf(x) = \E_x(f(Y_t))$). Then $(Y_t)_{t \geq 0}$ is actually a Bessel-3 process, which is a diffusion process following 
$$dY_t = dB_t + \frac{1}{Y_t}dt.$$
Note that the $Q$-process does not depend on the drift $r > 0$ and one gets an explicit formula for the density function of $\P_x(Y_t \in \cdot)$ (for any $x > 0$ and $t \geq 0$), which is 
\begin{equation}
    \label{density}
    y \mapsto \frac{2}{\sqrt{2 \pi t}} \frac{y}{x} \sinh\left(\frac{xy}{t}\right) e^{-\frac{x^2+y^2}{2t}}.
\end{equation}
Moreover, the measure $\gamma(dx) = x^2dx$ is an invariant measure for the Bessel-3 process.  

\section{Polynomial convergence in Wasserstein distance}
\label{wasserstein}

The main result of this paper is now clearly stated :
\begin{theorem}
\label{theorem}
If the initial law $\mu \in \cM_1((0,+\infty))$ satisfies
\begin{equation}
\label{integrability}
\int_0^\infty x^3 e^{rx}\mu(dx) < +\infty,\end{equation} 
then 
\begin{multline}
0 < \liminf_{t \to \infty} ~t \times \cW_1(\P_\mu[X_t \in \cdot | \tau_0 > t],\alpha_\Yaglom)\\ \leq  \limsup_{t \to \infty}~ t \times \cW_1(\P_\mu[X_t \in \cdot | \tau_0 > t],\alpha_\Yaglom) < +\infty.\label{result}\end{multline}
\end{theorem}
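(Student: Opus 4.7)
The idea is to reduce the problem to an asymptotic expansion for the Bessel-$3$ density via the Doob transform. Applying \eqref{q-proc} with $f$ replaced by $f/\eta$ yields the identity
\[
\E_x(f(X_t)\1_{\tau_0 > t}) = e^{-\lambda_0 t}\eta(x)\,\E_x\!\big(f(Y_t)/\eta(Y_t)\big),
\]
and plugging in the explicit formulas for $\eta$, $\lambda_0$ and the Bessel-$3$ density \eqref{density} one derives, after the $e^{-\lambda_0 t}$ factors cancel between numerator and denominator,
\[
\P_\mu(X_t \in dy \mid \tau_0 > t) = \frac{N_t^\mu(y)\,dy}{\int_0^\infty N_t^\mu(z)\,dz}, \quad N_t^\mu(y) := \frac{2 e^{-ry}}{\sqrt{2\pi t}}\int_0^\infty e^{rx}\sinh\!\Big(\tfrac{xy}{t}\Big) e^{-\frac{x^2+y^2}{2t}}\mu(dx).
\]
Writing $M_k := \int_0^\infty x^k e^{rx}\mu(dx)$, the hypothesis \eqref{integrability} is exactly $M_3 < \infty$, which automatically ensures $M_1, M_2 < \infty$ as well.

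The next step is to Taylor-expand to second order in $1/t$: using $\sinh(u) = u + u^3/6 + \cdots$ and $e^{-v} = 1 - v + \cdots$,
\[
\sinh(xy/t)e^{-(x^2+y^2)/(2t)} = \frac{xy}{t} - \frac{xy(x^2+y^2)}{2t^2} + O(1/t^3).
\]
Integrating against $e^{rx}\mu(dx)$ gives
\[
N_t^\mu(y) = \frac{2e^{-ry}}{\sqrt{2\pi}\,t^{3/2}}\Big[yM_1 - \frac{yM_3 + y^3 M_1}{2t} + o(1/t)\Big].
\]
Computing the normalization with $\int_0^\infty y e^{-ry}dy = 1/r^2$ and $\int_0^\infty y^3 e^{-ry}dy = 6/r^4$, the $\mu$-dependent ratio $M_3/M_1$ cancels exactly between numerator and denominator, producing the universal expansion
\[
q_t^\mu(y) := \frac{\P_\mu(X_t \in dy \mid \tau_0 > t)}{dy} = \alpha_\Yaglom(y)\Big(1 + \frac{6/r^2 - y^2}{2t} + R_t^\mu(y)\Big),\quad R_t^\mu(y) = o(1/t).
\]
Note in particular that the leading $1/t$-correction is \emph{independent of $\mu$}.

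To conclude, I would use the Kantorovich identity $\cW_1(\rho,\sigma) = \int_0^\infty |F_\rho(y) - F_\sigma(y)|\,dy$ for probability measures on $\R_+$ with finite first moment. Setting $G(y) := \tfrac12\int_0^y r^2 z e^{-rz}(6/r^2 - z^2)\,dz$, a direct computation shows $G \geq 0$, $G(0) = G(+\infty) = 0$, $G\not\equiv 0$, and by integration by parts $\int_0^\infty G(y)\,dy = 6/r^3 > 0$. Provided the remainder is integrable uniformly in $t$, this yields the sharp limit
\[
t\cdot\cW_1\big(\P_\mu(X_t \in \cdot \mid \tau_0 > t),\,\alpha_\Yaglom\big) \underset{t\to\infty}{\longrightarrow} \frac{6}{r^3},
\]
which implies both bounds in \eqref{result}. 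The lower bound alone can alternatively be seen by testing against $y \mapsto y+1 \in \Lip_{1,1}(\R_+)$, whose expectations under $q_t^\mu$ and $\alpha_\Yaglom$ differ asymptotically by $-6/(r^3 t)$.

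The main technical obstacle lies in controlling the remainder $R_t^\mu$ in an $L^1$-sense compatible with the Wasserstein integral. The Taylor expansion above must be dominated uniformly in $t$: in $x$ the next-order terms involve factors like $xy(x^2+y^2)^2/t^3$, whose integrals against $\mu$ are finite precisely because $M_3 < \infty$, while in $y$ the exponential prefactor $e^{-ry}$ in $N_t^\mu$ renders the tail harmless. Finding a clean dominating function integrable in both variables, and then translating the pointwise density expansion into a uniform CDF expansion, is the delicate point; once this is done all the identities above give \eqref{result} rigorously.
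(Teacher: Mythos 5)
Your computations are correct and your engine is the same as the paper's: both proofs hinge on the Doob transform to the Bessel-$3$ process and the second-order expansion $\sinh(xy/t)e^{-(x^2+y^2)/(2t)} = \tfrac{xy}{t} - \tfrac{xy(x^2+y^2)}{2t^2} + \dots$, integrated against $e^{rx}\mu(dx)$ under \eqref{integrability}. Where you diverge is in the packaging of the conclusion. The paper works at the level of test functions: its Lemma \ref{lemma} gives the quantitative bound $|\gamma(f)-K_t\mu Q_t f| \leq (C_f\int x^2\mu(dx)+C'_f)/t$, which, applied to $f/\eta$ uniformly over $f \in \Lip_{1,1}(\R_+)$, yields the $\limsup$ directly, while the $\liminf$ comes from the exact limit of $t(\E_\mu[f(X_t)|\tau_0>t]-\alpha_\Yaglom(f))$ for the single function $f(x)=1+x$. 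You instead work at the level of densities and CDFs via the Kantorovich identity, aiming at the sharp limit $t\,\cW_1 \to 6/r^3$. This is a genuinely stronger (and, as far as I can check, correct) statement: your observation that the ratio $M_3/M_1$ cancels between numerator and denominator, making the leading $1/t$-correction $\mu$-independent, is consistent with the paper's own formulas, where $\alpha_\Yaglom(f)\psi_\mu(\1_{\R_+})-\psi_\mu(f)$ reduces to $\tfrac12[\alpha_\Yaglom(f)\alpha_\Yaglom(y^2)-\alpha_\Yaglom(fy^2)]$, also independent of $\mu$, and evaluates to $-6/r^3$ at $f(x)=1+x$, matching your number.

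The gap you flag is real but fillable, and it is worth naming the tool the paper uses. Writing $\sinh(xy/t)e^{-(x^2+y^2)/(2t)}-\tfrac{xy}{t} = -\tfrac12\int_{(x-y)^2/2t}^{(x+y)^2/2t}(1-e^{-z})\,dz$ and using $0\leq 1-e^{-z}\leq z$ gives the exact domination
$$\left|\sinh\!\left(\tfrac{xy}{t}\right)e^{-\frac{x^2+y^2}{2t}}-\tfrac{xy}{t}\right| \leq \frac{xy(x^2+y^2)}{2t^2},$$
i.e.\ the error is dominated \emph{by the first-order term itself}, so dominated convergence for the density expansion needs only $M_3<\infty$ (no $M_5$, contrary to what your remark about $xy(x^2+y^2)^2/t^3$ might suggest). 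For the Wasserstein step, your CDF route additionally requires $t|F_{q_t^\mu}(y)-F_{\alpha_\Yaglom}(y)|$ to be dominated by a $y$-integrable function uniformly in $t$; the bound above gives $|q_t^\mu(y)-\alpha_\Yaglom(y)|\leq C_\mu(1+y^3)e^{-ry}/t$ for large $t$, whose tail integral decays exponentially, so this works — but it must be said, since a bound that is merely uniform in $y$ would not suffice. The paper sidesteps this entirely by never computing the exact limit of $\cW_1$: it bounds $|q_t^\mu(f)-\alpha_\Yaglom(f)|$ uniformly over $f\in\Lip_{1,1}(\R_+)$ using $|f(y)|\leq 1+y$, which is the cheaper way to the $\limsup$ if one does not insist on the constant $6/r^3$.
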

Before proving the theorem, let us have a few remarks about its statement :
\begin{remark}In this paper, we will only focus on the Wasserstein distance, but it is also possible to obtain the same result \eqref{result} for others distances. In particular, one gets the same statement for Theorem \ref{theorem} taking the total variation distance instead, as defined in \eqref{tv}, or also the Kolmogorov distance defined as follows :
$$d_{\Kolm}(\mu,\nu) := \sup_{x \in \R} |\mu((-\infty,x]) - \nu((-\infty,x])|,~~~~\forall \mu, \nu \in \cM_1((0,+\infty)).$$
In the same way, the convergence in Kolmogorov distance implies the weak convergence of measures, but in a weaker way than the total variation distance or the Wasserstein distance. 
\end{remark}
\begin{remark}Concerning the domain of attraction of $\alpha_\Yaglom$, the assumption on the integrability of the initial measure \eqref{integrability} is slightly stronger than the assumption \eqref{liminf} written previously. As a matter of fact, \eqref{integrability} holds when the density function $\rho$ of the initial measure satisfies
\begin{equation*} \label{ex}\lim_{x \to \infty} -\frac{1}{x} \log \rho(x) > r, \end{equation*}
but does not hold when one has equality instead. Remark in particular that, taking $\mu = \alpha_\Yaglom$, \eqref{integrability} is not satisfied, as well as \eqref{result}. In a general way, the speed of convergence of $t \mapsto \cW_1(\P_\mu(X_t \in \cdot | \tau_0 > t), \alpha_\Yaglom)$ when $\lim_{x \to \infty} -\frac{1}{x} \log \rho(x) = r$ remains an open question. \end{remark}
\begin{remark}
As written in the introduction, this speed of convergence was already found out by Polak and Rolski in \cite{PR2012}. More precisely, the authors showed that there exists $c > 0$ such that, for any $x > 0$,
$$\lim_{t \to \infty} t \times \int_0^\infty |\P_x[X_t \in \cdot | \tau_0 > t] - \alpha_\Yaglom|(dy) = c.$$
In addition to the question concerning the choice of the distance, the main question is whether this result holds for others initial laws than Dirac measures. The proof of Polak and Rolski relies on the asymptotic expansion of the density function of the sub-Markovian semi-group $P_tf(x) = \E_x[f(X_t)\1_{\tau_0 > t}]$, which is obtained from the serie expansion \eqref{expansion}, written further. However, integrating this expansion over a probability measure satisfying \eqref{integrability} (for example a probability measure admitting a density function decaying as $x \mapsto e^{-r' x}$, with $r' > r$), Fubini's theorem seems not to be well justified, so that the asymptotic expansion of the density function for a general initial distribution satisfying \eqref{integrability} is not obvious.      
\end{remark} 

\subsection{Asymptotic behavior for the Bessel-3 process}
We will now proceed to the proof of Theorem \ref{theorem}.
 
To do so, the main strategy is to use the $Q$-process as a Doob transform for the sub-Markovian semi-group $P_tf(x) = \E_x[f(X_t)\1_{\tau_0 > t}]$. In particular, it is well-known that the asymptotic behavior of this Doob transform is very linked to the one of the conditional probability measure $\P_\mu[X_t \in \cdot | \tau_0 > t]$, for some $\mu \in \cM_1((0,+\infty))$, as it was shown for example in \cite{DM2015, diaconis2019analytic, ocafrain2020} in the context of absorbed Markov processes, or in \cite{ferre2018more} for exploding Feynman-Kac semi-groups.

Hence, before proving Theorem \ref{theorem}, the following lemma will be first stated and proved : 
\begin{lemma}
\label{lemma}
For any measurable function $f$ such that $$C_f := \frac{1}{2} \int_{\R_+} |f(x)| x^2 dx < \infty~~\text{ and }~~C'_f := \frac{1}{2} \int_{\R_+} |f(x)| x^4 dx < \infty,$$ for any $t \geq 0$ and for any probability measure $\mu$ supported on $(0,+\infty)$ satisfying $\int_0^\infty x^{2} \mu(dx) < + \infty$, it holds
$$ |\gamma(f)-K_t \mu Q_tf| \leq \frac{C_f \int_0^\infty x^2 \mu(dx) + C'_f}{t},$$
where $$K_t := \frac{t \sqrt{2 \pi t}}{2}.$$
If moreover $f$ is positive, 
\begin{equation}
\label{approx}
t \times (\gamma(f)-K_t \mu Q_tf) \underset{t \to \infty}{\longrightarrow} \int_0^\infty \int_0^\infty f(y) \frac{y^2(x^2+y^2)}{2}dy \mu(dx).
\end{equation}
\end{lemma}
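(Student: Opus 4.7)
The plan is to compute $K_t \mu Q_t f$ by integrating the explicit Bessel-$3$ density \eqref{density} and then extracting the $1/t$ error from an elementary estimate of the kernel. Setting $c := xy/t$ and $s := (x^2+y^2)/(2t)$, the algebraic identity $\frac{ty}{x}\sinh(xy/t) = y^2 \cdot \sinh(c)/c$ rewrites the rescaled transition kernel as $K_t \cdot p_t(x,y) = y^2 e^{-s}\sinh(c)/c$. Since $\gamma(f) = \int_0^\infty\!\int_0^\infty f(y)\, y^2\, dy\, \mu(dx)$ (the inner integral is independent of $x$ and $\mu$ is a probability measure), one has
\begin{equation*}
\gamma(f) - K_t \mu Q_t f = \int_0^\infty \int_0^\infty f(y)\, y^2\Bigl[1 - e^{-s}\frac{\sinh(c)}{c}\Bigr]\, dy\, \mu(dx).
\end{equation*}

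The heart of the argument is then the elementary two-sided inequality
\begin{equation*}
0 \leq 1 - e^{-s}\frac{\sinh(c)}{c} \leq s, \qquad \text{whenever } 0 \leq c \leq s,
\end{equation*}
whose hypothesis $c \leq s$ is automatic since $2xy \leq x^2+y^2$. The left-hand inequality follows from the monotonicity of $c \mapsto \sinh(c)/c$ on $[0,\infty)$, which gives $e^{-s}\sinh(c)/c \leq e^{-s}\sinh(s)/s$, combined with $e^{-s}\sinh(s)\leq s$ (the derivative of $s - e^{-s}\sinh(s)$ equals $1 - e^{-2s}\geq 0$, with both sides vanishing at $0$). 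The right-hand inequality follows by multiplying $e^{-s} \geq 1-s$ with $\sinh(c)/c \geq 1$, the claim being trivial if $1-s < 0$.

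Substituting $s = (x^2+y^2)/(2t)$ and integrating $|f(y)| y^2$ against $\mu(dx)\otimes dy$ then gives
\begin{equation*}
|\gamma(f) - K_t \mu Q_t f| \leq \frac{1}{2t}\int_0^\infty\!\int_0^\infty |f(y)| y^2(x^2+y^2)\, dy\, \mu(dx) = \frac{C_f \int_0^\infty x^2\, \mu(dx) + C'_f}{t},
\end{equation*}
which is the quantitative bound of the lemma. For the asymptotic equivalent \eqref{approx}, a Taylor expansion of $e^{-s}$ and $\sinh(c)/c$ shows that for each fixed $x,y>0$,
\begin{equation*}
t\Bigl[1 - e^{-s}\frac{\sinh(c)}{c}\Bigr] \underset{t\to\infty}{\longrightarrow} \frac{x^2+y^2}{2},
\end{equation*}
so $t f(y) y^2 [1 - e^{-s}\sinh(c)/c]$ converges pointwise to $f(y) y^2(x^2+y^2)/2$. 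The previous inequality furnishes the $(dy\otimes\mu)$-integrable dominating function $|f(y)| y^2(x^2+y^2)/2$, and when $f \geq 0$, dominated convergence concludes.

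The main obstacle is constant sharpness rather than analytic depth: a cruder approach (say, a mean-value remainder on $e^{-(x-y)^2/(2t)} - e^{-(x+y)^2/(2t)}$ followed by $(x+y)^2 \leq 2(x^2+y^2)$) yields the correct $1/t$ rate but inflates the constants by a factor of two. The clean rewrite $y^2 e^{-s}\sinh(c)/c$, together with the sharp separate inequalities $e^{-s}\sinh(s)\leq s$ and $e^{-s}\sinh(c)/c \geq 1-s$, is what achieves the exact constant $C_f \int x^2 \mu(dx) + C'_f$ stated in the lemma.
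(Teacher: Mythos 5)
Your proof is correct and follows essentially the same route as the paper: the same decomposition of $\gamma(f)-K_t\mu Q_tf$ as a double integral against the kernel discrepancy, the same pointwise bound $0\le y^2\bigl[1-e^{-s}\sinh(c)/c\bigr]\le \tfrac{y^2(x^2+y^2)}{2t}$ with the identical constant, and the same dominated-convergence argument for the limit \eqref{approx}. The only difference is in how the elementary kernel inequality is derived: the paper writes $\sinh(c)e^{-s}-c=-\tfrac12\int_{(x-y)^2/(2t)}^{(x+y)^2/(2t)}(1-e^{-z})\,dz$ and uses $1-e^{-z}\le z$, whereas you factor the kernel as $y^2e^{-s}\sinh(c)/c$ and combine $e^{-s}\sinh(s)\le s$ with $e^{-s}\ge 1-s$ — both give exactly the same two-sided estimate.
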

\begin{proof}
Let $f$ satisfying $\int_{\R_+} |f(x)|x^4dx < + \infty$ and $\int_{\R_+} |f(x)|x^2dx < + \infty$, then let $x > 0$. By the explicit formula of the density function \eqref{density},
$$K_t Q_tf(x) = \int_{\R_+} f(y) t \frac{y}{x} \sinh\left(\frac{xy}{t}\right) e^{-\frac{x^2+y^2}{2t}}dy.$$
As a result, for any $t \geq 0$,
\begin{align*}K_t Q_tf(x) - \gamma(f) &= \int_{\R_+} f(y) \left[t \frac{y}{x} \sinh\left(\frac{xy}{t}\right) e^{-\frac{x^2+y^2}{2t}} - y^2\right]dy \\
&= \int_{\R_+} f(y) t \frac{y}{x}\left[ \sinh\left(\frac{xy}{t}\right) e^{-\frac{x^2+y^2}{2t}} - \frac{xy}{t}\right]dy
\end{align*}
Defining $g : z \mapsto e^{-z} - 1 + z$, one has 
\begin{align}\sinh\left(\frac{xy}{t}\right) e^{-\frac{x^2+y^2}{2t}} - \frac{xy}{t} &= \frac{1}{2} \left(g\left[\frac{(x-y)^2}{2t}\right] - g\left[\frac{(x+y)^2}{2t}\right]\right) \notag \\
&= -\frac{1}{2} \int_{\frac{(x-y)^2}{2t}}^{\frac{(x+y)^2}{2t}} g'(z)dz, &\forall y > 0, \forall t \geq 0. \label{p}\end{align}
However, denoting $h : z \mapsto \frac{z^2}{2}$, using that $g'(z) = 1-e^{-z} \leq z = h'(z)$ for any $z \geq 0$,
$$ \int_{\frac{(x-y)^2}{2t}}^{\frac{(x+y)^2}{2t}} g'(z)dz \leq  \int_{\frac{(x-y)^2}{2t}}^{\frac{(x+y)^2}{2t}} h'(z)dz = \frac{(x+y)^4-(x-y)^4}{8t^2} = \frac{xy(x^2+y^2)}{t^2}. $$
As a result, for any $y \in \R_+$ and $t \geq 0$,
$$\left|\sinh\left(\frac{xy}{t}\right) e^{-\frac{x^2+y^2}{2t}} - \frac{xy}{t}\right| \leq \frac{xy(x^2+y^2)}{2t^2}.$$
Thus, for any $t \geq 0$,
\begin{align}
|K_t Q_tf(x) - \gamma(f)| &\leq \int_{\R_+} |f(y)| \frac{y^2(x^2+y^2)}{2t}dy \notag \\ 
&\leq \frac{C_f x^2 + C'_f }{t}. \label{name}\end{align}
As a result, integrating over a probability measure $\mu(dx)$ supported on $(0,+\infty)$,
\begin{align}
|K_t \mu Q_tf - \gamma(f)| 
&\leq \frac{C_f \int_0^\infty x^2 \mu(dx) + C'_f}{t} \label{lebesgue},\end{align}
which is the first part of the lemma.
Now, assume moreover that $f$ is positive. Then, since $\frac{xy}{t} - \sinh\left(\frac{xy}{t}\right)e^{-\frac{x^2+y^2}{2t}} \geq 0$ for any $x,y > 0$ (this is proved by \eqref{p}), 
$$\left| K_t Q_tf(x) - \gamma(f)\right| = \int_0^\infty f(y) t \frac{y}{x} \left(\frac{xy}{t} - \sinh\left(\frac{xy}{t}\right)e^{-\frac{x^2+y^2}{2t}}\right)dy,$$
and for any probability measure $\mu$ supported on $(0,+\infty)$,
$$\gamma(f) - K_t \mu Q_tf = \int_0^\infty \int_0^\infty f(y) t \frac{y}{x} \left(\frac{xy}{t} - \sinh\left(\frac{xy}{t}\right)e^{-\frac{x^2+y^2}{2t}}\right)dy\mu(dx).$$
The function $y \mapsto  \sinh\left(\frac{xy}{t}\right)e^{-\frac{x^2+y^2}{2t}}$ can be expressed as a serie expansion and one has
\begin{equation} \label{expansion}\frac{xy}{t} - \sinh\left(\frac{xy}{t}\right)e^{-\frac{x^2+y^2}{2t}} = \frac{xy(x^2+y^2)}{2t^2} + \sum_{n \geq 3} \frac{a_n(x,y)}{t^n},\end{equation}
 where, for any $n \geq 3$,
$$a_n(x,y) := \frac{(-1)^n}{2^{n} n!}\left((x+y)^{2n} - (x-y)^{2n}\right).$$
Hence, for any $x , y > 0$,
$$ t^2 \frac{y}{x} \left(\frac{xy}{t} - \sinh\left(\frac{xy}{t}\right)e^{-\frac{x^2+y^2}{2t}}\right) \underset{t \to \infty}{\longrightarrow} \frac{y^2(x^2+y^2)}{2}.$$
 Thus, using \eqref{name}, by Lebesgue's theorem, one shows that if $\mu$ is a probability measure supported on $(0,\infty)$ satisfying $\int_0^\infty x^{2} \mu(dx) < + \infty$, then one has
\begin{equation*}
\label{approx2}
t \times (\gamma(f)-K_t \mu Q_tf ) \underset{t \to \infty}{\longrightarrow} \int_0^\infty \int_0^\infty f(y) \frac{y^2(x^2+y^2)}{2}dy \mu(dx).
\end{equation*}
\end{proof}

\subsection{Proof of Theorem \ref{theorem}}
Theorem \ref{theorem} will now be proved. 
Let $\mu$ be a probability measure supported on $(0,+\infty)$ satisfying
$$\int_0^\infty x^2 \eta(x) \mu(dx) < + \infty.$$
Remark that this above-mentioned condition is exactly the condition \eqref{integrability}. Also remark that this condition implies that
$$\int_0^\infty \eta(x) \mu(dx) < + \infty.$$

The first step is to prove that there exists $t_\mu$ and $C_\mu < + \infty$ such that, for any $t \geq t_\mu$,
$$\cW_1(\P_\mu(X_t \in \cdot | \tau_0 > t), \alpha_{Yaglom}) \leq C_\mu/t,$$
which will imply that $\limsup_{t \to \infty} t \times \cW_1(\P_\mu(X_t \in \cdot | \tau_0 > t), \alpha_{Yaglom}) < + \infty$.

In what follows, one will use the following notation
$$\eta \circ \mu(dx) := \frac{\eta(x) \mu(dx)}{\mu(\eta)}.$$
Then, for any $t \geq 0$ and $f \in \Lip_{1,1}(\R_+)$,
\begin{align}
\E_\mu[f(X_t) | \tau_0 > t] &= \frac{\int_0^\infty \E_x[f(X_t) \1_{\tau_0 > t}] \mu(dx)}{\int_0^\infty \E_x[\1_{\tau_0 > t}] \mu(dx)} \notag \\
&=  \frac{K_t \int_0^\infty \frac{e^{\lambda_0 t}}{\eta(x)}\E_x[\eta(X_t)\frac{f(X_t)}{\eta(X_t)} \1_{\tau_0 > t}] \eta(x) \mu(dx)}{K_t \int_0^\infty \frac{e^{\lambda_0 t}}{\eta(x)}\E_x[\eta(X_t)\frac{1}{\eta(X_t)} \1_{\tau_0 > t}] \eta(x)\mu(dx)} \notag \\
&= \frac{K_t \int_0^\infty Q_t[f/\eta](x) \eta(x) \mu(dx)}{K_t \int_0^\infty Q_t[\1_{\R_+}/\eta](x) \eta(x)\mu(dx)} \notag \\
&= \frac{K_t (\eta \circ \mu) Q_t [f/\eta]}{K_t (\eta \circ \mu) Q_t [\1_{\R_+}/\eta]}, \label{ef2}
\end{align}
where we recall that the semi-group $(Q_t)_{t \geq 0}$ was defined previously in \eqref{q-proc}.
Then, let us note that
$$\sup_{f \in \Lip_{1,1}(\R_+)} C_{f/\eta} =  \sup_{f \in \Lip_{1,1}(\R_+)} \int_0^\infty |f|(x) x e^{-rx}dx \leq  \int_0^\infty (1+x) x e^{-rx}dx =: C < + \infty$$
and 
$$\sup_{f \in \Lip_{1,1}(\R_+)} C'_{f/\eta} = \sup_{f \in \Lip_{1,1}(\R_+)} \int_0^\infty |f|(x) x^3 e^{-rx}dx \leq  \int_0^\infty (1+x) x^3 e^{-rx}dx =: C' < + \infty.$$
Thus, by Lemma \ref{lemma} and noting that $\gamma(f/\eta) = \alpha_{\Yaglom}(f)$, for any $f \in \Lip_{1,1}(\R_+)$,
$$|K_t (\eta \circ \mu) Q_t[f/\eta] - \alpha_\Yaglom(f)| \leq \frac{C \int_0^\infty x^2 (\eta \circ \mu)(dx) + C'}{t}.$$
Since $\1_{\R_+} \in \Lip_{1,1}(\R_+)$, one has also
\begin{equation} \label{un}|K_t (\eta \circ \mu) Q_t(\1_{\R_+}/\eta) - 1| \leq \frac{C \int_0^\infty x^2 (\eta \circ \mu)(dx) + C'}{t}.\end{equation}
Therefore, given \eqref{ef2}, for any $t > C_\mu := C \int_0^\infty x^2 (\eta \circ \mu)(dx) + C'$, 
\begin{equation}
\label{encadrement}
    \frac{\alpha_\Yaglom(f) - \frac{C_\mu}{t}}{1 + \frac{C_\mu}{t}} \leq \E_\mu(f(X_t)|\tau_0 > t) \leq  \frac{\alpha_\Yaglom(f) + \frac{C_\mu}{t}}{1 - \frac{C_\mu}{t}}.
\end{equation}
Using that $|\alpha_\Yaglom(f)| \leq 1 + 2/r$ for any $f \in \Lip_{1,1}(\R_+)$, for any $t \geq C_\mu +1$,
\begin{align*}
\frac{\alpha_\Yaglom(f) + \frac{C_\mu}{t}}{1 - \frac{C_\mu}{t}} &= \left(\alpha_\Yaglom(f) + \frac{C_\mu}{t}\right)\left(1 + \frac{\frac{C_\mu}{t}}{1 - \frac{C_\mu}{t}}\right) \\
&\leq \alpha_\Yaglom(f) + \frac{C_\mu}{t} +  \left(1+2/r + \frac{C_\mu}{C_\mu+1}\right) \frac{ \frac{C_\mu}{t}}{1 - \frac{C_\mu}{t}} \\
&\leq \alpha_\Yaglom(f) + \frac{C'_\mu}{t},
\end{align*}
where
$$C'_\mu := C_\mu \left(1+\frac{1+2/r + \frac{C_\mu}{C_\mu+1}}{1-\frac{C_\mu}{1+C_\mu}}\right).$$
In a same way, one can prove that, for any $t \geq C_\mu + 1$,
$$\alpha_\Yaglom(f) - \frac{C''_\mu}{t} \leq \frac{\alpha_\Yaglom(f) - \frac{C_\mu}{t}}{1 + \frac{C_\mu}{t}}$$
with
$$C''_\mu := C_\mu \left(2+2/r + \frac{C_\mu}{C_\mu+1}\right).$$
As a result, using \eqref{encadrement}, for any $t \geq C_\mu + 1$,
$$\cW_1(\P_\mu[X_t \in \cdot | \tau_0 > t], \alpha_\Yaglom) \leq \frac{C'_\mu \lor C''_\mu}{t},$$
which concludes the first step.

Now, set $f : x \mapsto 1+x$. Then $f \in \Lip_{1,1}(\R_+)$, positive on $\R_+$, and one has therefore $$\cW_1(\P_\mu(X_t \in \cdot | \tau_0 > t), \alpha_{\Yaglom}) \geq | \E_\mu(f(X_t)| \tau_0 > t) - \alpha_\Yaglom(f)|.$$
 Moreover, by the computation of the moments of $\alpha_\Yaglom$, $f$ satisfies the following inequality : \begin{equation}\label{cond}\int_0^\infty y^2 f(y) \alpha_\Yaglom(dy) > \alpha_\Yaglom(f) \int_0^\infty y^2 \alpha_\Yaglom(dy).\end{equation}
Denote $\psi_{\mu}$ the positive measure defined by \begin{equation}\label{notation}\psi_{\mu} : f \mapsto \int_0^\infty \int_0^\infty f(y) \frac{y^2(x^2+y^2)}{2}\frac{\eta(x)}{\eta(y)}\frac{\mu(dx)}{\mu(\eta)}dy.\end{equation} 
Then, since $\frac{y^2}{\eta(y)}dy = \alpha_\Yaglom(dy)$, one has
\begin{align}
\psi_{\mu}(f) &=  \int_0^\infty \int_0^\infty f(y) \frac{(x^2+y^2)}{2} \eta(x)\frac{\mu(dx)}{\mu(\eta)}\alpha_\Yaglom(dy) \notag \\ & = \frac{1}{2}\alpha_\Yaglom(f) \int_0^\infty x^2 (\eta \circ \mu)(dx) + \frac{1}{2}\int_0^\infty f(y)y^2 \alpha_\Yaglom(dy), \label{suite}\end{align}
and the condition \eqref{cond} implies therefore that
$$\psi_{\mu}(f) > \alpha_\Yaglom(f) \psi_{\mu}(\1_{\R_+}).$$
Then, by the equation \eqref{ef2} and using the second part of Lemma \ref{lemma}, for any $t \geq 0$,
\begin{align*}
\E_\mu(f(X_t) | \tau_0 > t) &= \frac{K_t (\eta \circ \mu)Q_t[f/\eta]}{K_t (\eta \circ \mu)Q_t[\1_{\R_+}/\eta]} \\
&= \frac{\alpha_\Yaglom(f) - \frac{1}{t} \psi_{\mu}(f) + o(1/t)}{1 - \frac{1}{t} \psi_{\mu}(\1_{\R_+}) + o(1/t)}  \\
&= \left[ \alpha_\Yaglom(f) - \frac{1}{t}\psi_{\mu}(f) + o(1/t)\right] \left[1 + \frac{1}{t}\psi_{\mu}(\1_{\R_+}) + o(1/t)\right] \\
&= \alpha_\Yaglom(f) + \frac{1}{t}\left (\alpha_\Yaglom(f) \psi_{\mu}(\1_{\R_+}) - \psi_{\mu}(f)\right) + o(1/t).
\end{align*}
So, for any $\epsilon \in (0,\psi_{\mu}(f) - \alpha_\Yaglom(f) \psi_{\mu}(\1_{\R_+}))$ and for $t$ large enough,
$$t \times |\E_\mu(f(X_t) | \tau_0 > t) - \alpha_{\Yaglom}(f)| \geq \psi_{\mu}(f) - \alpha_\Yaglom(f) \psi_{\mu}(\1_{\R_+}) - \epsilon > 0,$$
which proves that
\begin{multline*}\liminf_{t \to \infty}~ t \times \cW_1(\P_\mu(X_t \in \cdot | \tau_0 > t), \alpha_{\Yaglom}) \\ \geq \liminf_{t \to \infty} ~t \times | \E_\mu(f(X_t)| \tau_0 > t) - \alpha_\Yaglom(f)| > 0.\end{multline*}
This ends the proof. 

\section{Polynomial convergence to the Bessel-$3$ process}

Now, let us state the following theorem:

\begin{theorem}
\label{q-processus}
There exists $s_0 > 0$ such that, for any $\mu \in \cM_1((0,+\infty))$ satisfying $\int_0^\infty x^4 e^{rx} \mu(dx) < + \infty$ and for any $s \geq s_0$,
\begin{multline*}
0 < \liminf_{t \to \infty} ~t \times \cW_1(\P_\mu[X_{s} \in \cdot | \tau_0 > t], \Q_{\eta \circ \mu}[X_{s} \in \cdot]) \\ \leq  \limsup_{t \to \infty}~ t \times\cW_1(\P_\mu[X_{s} \in \cdot | \tau_0 > t],\Q_{\eta \circ \mu}[X_{s} \in \cdot]) < +\infty.\end{multline*}
\end{theorem}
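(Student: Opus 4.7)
The plan is to adapt the strategy of Theorem \ref{theorem} by splitting the conditioning at the intermediate time $s$ via the Markov property. For any $t > s$,
\begin{equation*}
\E_\mu[f(X_s) | \tau_0 > t] = \frac{\E_\mu\bigl[f(X_s) \P_{X_s}(\tau_0 > t-s) \1_{\tau_0 > s}\bigr]}{\E_\mu\bigl[\P_{X_s}(\tau_0 > t-s) \1_{\tau_0 > s}\bigr]}.
\end{equation*}
Applying \eqref{q-proc} to $\1_{\R_+}/\eta$ yields the key identity $\P_x(\tau_0 > t-s) = e^{-\lambda_0(t-s)} \eta(x)\, Q_{t-s}[\1_{\R_+}/\eta](x)$; combined with the Doob-transform relation $\E_\mu[h(X_s)\eta(X_s)\1_{\tau_0 > s}] = e^{-\lambda_0 s}\mu(\eta)\,\E^\Q_{\eta \circ \mu}[h(X_s)]$ (valid for nonnegative $h$ and obtained directly from \eqref{q-proc}), this rewrites the quantity of interest as
\begin{equation*}
\E_\mu[f(X_s) | \tau_0 > t] = \frac{\E^\Q_{\eta \circ \mu}\bigl[f(X_s)\, K_{t-s} Q_{t-s}[\1_{\R_+}/\eta](X_s)\bigr]}{\E^\Q_{\eta \circ \mu}\bigl[K_{t-s} Q_{t-s}[\1_{\R_+}/\eta](X_s)\bigr]}.
\end{equation*}

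I would next invoke Lemma \ref{lemma} with $f = \1_{\R_+}/\eta$: since $\gamma(\1_{\R_+}/\eta) = 1$, the lemma supplies both the uniform estimate $|K_t Q_t[\1_{\R_+}/\eta](x) - 1| \leq (Cx^2 + C')/t$ and, via \eqref{approx} with $\mu = \delta_x$, the pointwise limit $t\bigl(1 - K_t Q_t[\1_{\R_+}/\eta](x)\bigr) \to \phi(x) := \frac{x^2}{2} + \frac{3}{r^2}$, where the explicit form of $\phi$ comes from a direct computation of $\int_0^\infty \frac{1}{\eta(y)} \frac{y^2(x^2+y^2)}{2}\, dy$. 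Since $|f(y)| \leq 1+y$ whenever $f \in \Lip_{1,1}(\R_+)$, the uniform estimate paired with the hypothesis $\int x^4 e^{rx}\mu(dx) < \infty$ (which is exactly $(\eta \circ \mu)(x \mapsto x^3) < \infty$) provides a dominating random variable proportional to $(1+X_s)(1+X_s^2)$, whose expectation under $\Q_{\eta \circ \mu}$ is finite because the Bessel-$3$ density \eqref{density} has Gaussian decay and transports moments of every order over a finite time horizon. Dominated convergence then gives, uniformly in $f \in \Lip_{1,1}(\R_+)$,
\begin{equation*}
\E_\mu[f(X_s) | \tau_0 > t] = \E^\Q_{\eta \circ \mu}[f(X_s)] - \frac{1}{t}\,\mathrm{Cov}^\Q_{\eta \circ \mu}\bigl(f(X_s),\phi(X_s)\bigr) + o(1/t).
\end{equation*}

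The upper bound then follows exactly as in Theorem \ref{theorem}: the covariance above is bounded uniformly in $f \in \Lip_{1,1}(\R_+)$ by $2\,\E^\Q_{\eta \circ \mu}[(1+X_s)\phi(X_s)] < \infty$, and the same manipulation of the form $(a \pm c/t)/(1 \mp c/t)$ used at the end of Section \ref{wasserstein} absorbs the vanishing denominator. For the lower bound, I would test the expansion against $f(x) = 1+x \in \Lip_{1,1}(\R_+)$, reducing matters to the strict positivity of $\frac{1}{2}\,\mathrm{Cov}^\Q_{\eta \circ \mu}(X_s, X_s^2)$. Writing this covariance as $\frac{1}{4}\E[(X_s - X_s')(X_s^2 - (X_s')^2)]$ with $X_s'$ an independent copy makes the integrand pointwise nonnegative, and strictly positive with positive probability because the Bessel-$3$ density \eqref{density} has full support on $(0,\infty)$ for every $s>0$, rendering the law of $X_s$ under $\Q_{\eta \circ \mu}$ nondegenerate; any $s_0 > 0$ therefore works. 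The main technical obstacle is precisely the uniform-in-$f$ control of the $o(1/t)$ remainder, which is why the integrability condition is strengthened here by one power of $x$ compared to Theorem \ref{theorem}: this extra $x$ delivers the $(1+X_s)(1+X_s^2)$ dominator required to exchange limit and expectation inside the $\Q_{\eta \circ \mu}$-integral.
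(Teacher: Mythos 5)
Your proposal is correct and, for the decomposition and the upper bound, follows essentially the same route as the paper: split the conditioning at time $s$ via the Markov property, use \eqref{q-proc} to turn $\P_x(\tau_0>t-s)$ into $e^{-\lambda_0(t-s)}\eta(x)Q_{t-s}[\1_{\R_+}/\eta](x)$, and control $K_{t-s}Q_{t-s}[\1_{\R_+}/\eta]$ with Lemma \ref{lemma}; your identity $\phi(x)=\frac{x^2}{2}+\frac{3}{r^2}$ is exactly the paper's $\psi_{\delta_x}(\1_{\R_+})$, your dominating variable $(1+X_s)(1+X_s^2)$ plays the role of the paper's display \eqref{lebesgue}, and your observation that the hypothesis is $(\eta\circ\mu)(x^3)<\infty$ together with the Bessel-$3$ moment bounds replaces Lemma \ref{lamme} and the identity $\int x^2(\eta\circ\phi_s(\mu))(dx)=\E^\Q_{\eta\circ\mu}[X_s^2]$. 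Where you genuinely diverge is the lower bound. The paper tests against $g(x)=(1-x)\lor 0$ and forces strictness of $\E^\Q_{\eta\circ\mu}[g(X_s)X_s^2]<\E^\Q_{\eta\circ\mu}[g(X_s)]\,\E^\Q_{\eta\circ\mu}[X_s^2]$ by the crude chain $\E^\Q[gX_s^2]\le\E^\Q[g]<\E^\Q[g]\E^\Q[X_s^2]$, which is why it must take $s_0$ large enough that $\E^\Q_{\eta\circ\mu}[X_s^2]>1$ uniformly in $\mu$ (e.g.\ $s_0>1/3$, since $\E^\Q_x[X_s^2]=x^2+3s$). You instead test against $f(x)=1+x$ and prove $\mathrm{Cov}^\Q_{\eta\circ\mu}(X_s,X_s^2)>0$ by the symmetrization $\frac12\E[(X_s-X_s')^2(X_s+X_s')]$, which is nonnegative pointwise and strictly positive because the law of $X_s$ under $\Q_{\eta\circ\mu}$ is nondegenerate (it has a density by \eqref{density}). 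This is cleaner and strictly stronger: it yields the lower bound for every $s>0$, eliminating the threshold $s_0$ altogether, at the price of needing $\E^\Q_{\eta\circ\mu}[X_s^3]<\infty$ — which you already require for the upper bound anyway. The only imprecision is your phrase ``uniformly in $f$'' attached to the dominated-convergence expansion; uniformity of the $o(1/t)$ term is neither delivered by the DCT nor needed, since the $\limsup$ only uses the uniform crude bound from Lemma \ref{lemma} and the $\liminf$ only uses the expansion for the single function $f(x)=1+x$. This does not affect the validity of the argument.
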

\begin{remark}
Note that, in this theorem, the assumption of integrability on the initial measure is slightly stronger than \eqref{integrability}. This is due to the use of the $1$-Wasserstein distance. For the total variation distance, the condition \eqref{integrability} is suitable to obtain the same statement as Theorem \ref{q-processus}.  \end{remark}
Before proving this theorem, the following lemma is needed:
\begin{lemma}
\label{lamme}
If $\mu$ is a probability measure such that $\int_0^\infty x^3 e^{rx} \mu(dx) < + \infty$, then, for any $s \geq 0$, 
\begin{equation}
    \label{aim}
    \E_\mu[X_s^3 e^{rX_s} | \tau_0 > s] < + \infty.
\end{equation}
\end{lemma}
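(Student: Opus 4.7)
The plan is to reduce the claim to a third-moment estimate for the Bessel-$3$ semi-group via the Doob-transform identity \eqref{q-proc}. First I would split the conditional expectation as $\E_\mu[X_s^3 e^{rX_s} \1_{\tau_0>s}]/\P_\mu(\tau_0 > s)$. Since $\mu$ is supported on $(0,+\infty)$ and $\P_x(\tau_0 > s) > 0$ for every $x > 0$, the denominator is strictly positive, so it suffices to show that the numerator is finite. The case $s = 0$ is immediate from the hypothesis, so I focus on $s > 0$.

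The key algebraic step is the observation that $\eta(x) = xe^{rx}/r^2$ entails the identity $X_s^3 e^{rX_s} = r^2 X_s^2 \eta(X_s)$. Applying \eqref{q-proc} with $f(y) = y^2$ then gives
\[
\E_x\!\left[X_s^3 e^{rX_s}\, \1_{\tau_0 > s}\right] = r^2 e^{-\lambda_0 s}\, \eta(x)\, \E^{\Q}_x[Y_s^2],
\]
where $(Y_t)_{t\geq 0}$ denotes the Bessel-$3$ process. The problem is thus reduced to bounding $\eta(x)\,\E^{\Q}_x[Y_s^2]$ by a function of $x$ that is $\mu$-integrable.

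Next I would estimate $\E^{\Q}_x[Y_s^2]$ using the explicit transition density \eqref{density}. Writing $\sinh(u) = (e^u - e^{-u})/2$ and completing the square in the exponent gives
\[
\frac{2}{\sqrt{2\pi s}}\frac{y}{x}\sinh\!\left(\frac{xy}{s}\right)e^{-(x^2+y^2)/(2s)} = \frac{1}{\sqrt{2\pi s}}\frac{y}{x}\left[e^{-(y-x)^2/(2s)} - e^{-(y+x)^2/(2s)}\right] \leq \frac{y}{x\sqrt{2\pi s}}\, e^{-(y-x)^2/(2s)}.
\]
Hence $\E^{\Q}_x[Y_s^2]$ is bounded by $1/x$ times a truncated third absolute moment of a Gaussian with mean $x$ and variance $s$, which standard bounds control by $C_s(1+x^3)/x$ for some $C_s > 0$ depending only on $s$. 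Multiplying by $\eta(x)$, the $1/x$ factor cancels exactly against the $x$ built into $\eta$, producing $\eta(x)\,\E^{\Q}_x[Y_s^2] \leq C'_s(1+x^3)e^{rx}$, which integrates finitely against $\mu$ by hypothesis.

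The only delicate point is the apparent singularity at $x = 0$ coming from the $1/x$ factor in the Bessel-$3$ density; it is tamed by the $x$-factor in $\eta$, which also explains why the natural integrability hypothesis is phrased with the weight $\eta(x) \propto xe^{rx}$ rather than $e^{rx}$ alone. No genuinely hard step is anticipated beyond this cancellation.
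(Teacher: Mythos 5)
Your proof is correct, and it shares the paper's opening move — reducing \eqref{aim} via the Doob transform \eqref{q-proc} to the finiteness of $\E^{\Q}_{\eta\circ\mu}[X_s^2]$, using $X_s^3 e^{rX_s} = r^2 X_s^2\,\eta(X_s)$ — but it diverges on the key estimate. The paper bounds the second moment of the Bessel-$3$ process by stochastic calculus: Itô's formula applied to $X_{s\wedge T_N}^2$, the vanishing of the localized stochastic integral, and Fatou's lemma, yielding the sharp bound $\E^{\Q}_x[X_s^2]\leq x^2+3s$. You instead exploit the closed-form transition density \eqref{density}, rewrite $\sinh$ to expose the Gaussian kernel $e^{-(y-x)^2/(2s)}$, and control a third absolute Gaussian moment, arriving at $\E^{\Q}_x[Y_s^2]\leq C_s(1+x^3)/x$. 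Your bound is cruder (it has a spurious $1/x$ blow-up near $0$, where the true value is bounded), but as you correctly observe the factor $x$ inside $\eta$ cancels it, and for large $x$ both bounds are of order $x^2$, so after multiplying by $\eta(x)$ both reduce to the hypothesis $\int_0^\infty x^3 e^{rx}\mu(dx)<\infty$ (together with the trivial observation that this hypothesis also controls $\int_0^\infty e^{rx}\mu(dx)$). The trade-off: your argument is purely computational and leans on the explicit density already displayed in the paper, whereas the paper's argument needs only the SDE for the Bessel-$3$ process and produces the cleaner inequality $\E^{\Q}_x[X_s^2]\leq x^2+3s$, which is then reused quantitatively in the proof of Theorem \ref{q-processus} (via \eqref{fatou}); your version would suffice for the lemma itself but not as a drop-in replacement for that later use.
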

\begin{proof}
Let $s \geq 0$. Then, by \eqref{q-proc}, it is enough to prove that
$\E^\Q_{\eta \circ \mu}[X_s^2] < + \infty$.
For any $x > 0$, under $\Q_x$, $(X_t)_{t \geq 0}$ is a Bessel-3 process following 
$$dX_t = dW_t + \frac{1}{X_t}dt,$$
given a one-dimensional Brownian motion $(W_t)_{t \geq 0}$. 
Let $x > 0$ and, for any $N > x$, denote $T_N := \inf\{t \geq 0 : X_t = N\}$. Then, by Itô's formula applied to $s \land T_N$, for any $N > x$, 
$$\E^\Q_x[X_{s \land T_N}^2] = x^2 + \E^\Q_x\left[\int_0^{s \land T_N} 2 X_u dW_u\right] + 3 \E^\Q_x[s \land T_N].$$ 
Since $\E^\Q_x\left[\int_0^{s \land T_N} X^2_u du\right] < + \infty$ for any $s \geq 0$, the process $(\int_0^{s \land T_N} 2 X_u dW_u)_{s \geq 0}$ is a martingale under $\Q_x$, so $\E^\Q_x\left[\int_0^{s \land T_N} 2 X_u dW_u\right] = 0$ for any $N > x$. As a result,
$$\E^\Q_x[X_{s \land T_N}^2] = x^2 + 3 \E^\Q_x[s \land T_N],~~~~\forall N > x.$$ 
Now, by Fatou's lemma and the monotone convergence theorem, for any $s \geq 0$,
\begin{equation}
\label{fatou}\E^\Q_x[X_{s}^2] \leq \liminf_{N \to + \infty} \E^\Q_x[X_{s \land T_N}^2] = x^2 + 3 \liminf_{N \to \infty} \E^\Q_x[s \land T_N] = x^2 + 3s.\end{equation}
Hence, for any probability measure $\mu$ such that  $\int_0^\infty x^3 e^{rx} \mu(dx) < + \infty$,
$$\E^\Q_{\eta \circ \mu}[X_s^2] = \int_0^\infty \frac{\eta(x) \mu(dx)}{\mu(\eta)} \E^\Q_x[X_s^2] \leq  \int_0^\infty \frac{\eta(x) \mu(dx)}{\mu(\eta)} (x^2 + 3s) < + \infty.$$
\end{proof}
Now, let us prove Theorem \ref{q-proc}:
\begin{proof}[Proof of Theorem \ref{q-proc}]
Let $\mu$ be a probability measure supported on $(0,+\infty)$ satisfying $\int_0^\infty x^4 e^{rx} \mu(dx) < + \infty$ and $s \leq t$. Then, by Markov property and using the notation $\phi_s(\mu) := \P_\mu(X_s \in \cdot | \tau_0 > s)$, for any $f \in \Lip_{0,1}(\R_+)$,
\begin{align}
\label{computation1} 
\E_\mu[f(X_{s}) | \tau_0 > t]  = \frac{\E_\mu[f(X_s)\1_{\tau_0 > s} \P_{X_s}[\tau_0 > t-s]]}{\E_\mu[\1_{\tau_0 > s} \P_{X_s}[\tau_0 > t-s]]} &= \E_\mu\left[\frac{{f(X_{s}) \1_{\tau_0 > s}}}{\P_\mu[\tau_0 >s]} \frac{\P_{X_s}[\tau_0 > t-s]}{\P_{\phi_s(\mu)}[\tau_0 > t-s]}\right].
\end{align}
Then, by the definition of $(Q_t)_{t \geq 0}$ \eqref{q-proc},
\begin{align}
\label{computation2}
\frac{\P_{X_s}[\tau_0 > t-s]}{\P_{\phi_s(\mu)}[\tau_0 > t-s]} 
&=  \frac{\eta(X_s)}{\phi_s(\mu)(\eta)}  \frac{K_{t-s}Q_{t-s}[\1_{\R_+}/\eta](X_s)}{K_{t-s}(\eta \circ \phi_s(\mu))Q_{t-s}[\1_{\R_+}/\eta]}.
\end{align}
By Lemma \ref{lamme}, $C_{\phi_s(\mu)} < + \infty$ (recalling that $C_\mu$ is defined in the proof of Theorem \ref{theorem}, for any $\mu$). Then, by \eqref{un}, the following inequalities hold for $t \geq s + C_{\phi_s(\mu)}$:
\begin{align*}
\frac{1 - \frac{C_{X_s}}{t-s}}{1 + \frac{C_{\phi_s(\mu)}}{t-s}} &\leq  \frac{K_{t-s}Q_{t-s}[\1_{\R_+}/\eta](X_s)}{K_{t-s}(\eta \circ \phi_s(\mu))Q_{t-s}[\1_{\R_+}/\eta]} \leq \frac{1 + \frac{C_{X_s}}{t-s}}{1 - \frac{C_{\phi_s(\mu)}}{t-s}}.
\end{align*}
As a result, for any $f \in \Lip_{0,1}(\R_+)$ and for any $t \geq s + C_{\phi_s(\mu)}$,
\begin{align}\left| \E_\mu[f(X_{s})|\tau_0 > t] - \E^\Q_{\eta \circ \mu}[f(X_{s})]\right| &\leq  \E_\mu\left[\frac{{X_s \1_{\tau_0 > s}}}{\P_\mu[\tau_0 >s]} \frac{\eta(X_s)}{\phi_s(\mu)(\eta)}\left( \frac{C_{\phi_s(\mu)} + C_{X_s}}{t - s -  C_{\phi_s(\mu)}}\right)\right] \notag \\
&= \frac{ (C_{\phi_s(\mu)} + C') \E_{\eta \circ \mu}^\Q[X_s] + C \E^\Q_{\eta \circ \mu}[X_{s}^3]}{t-s-C_{\phi_s(\mu)}}. \label{lebesgue}\end{align}
Mimicking the proof of Lemma \ref{lamme}, one can show that, for any $x > 0$, 
$$\E^\Q_x[X^3_{s}] \leq x^3 + 6 \E^\Q_x\left[\int_0^{s} X_udu\right] \leq x^3 + 6 \int_0^s \sqrt{x^2 + 3u}du = x^3 + \frac{4}{3} \left[(x^2 + 3s)^{3/2} - x^3 \right],$$
where the second inequality is due to a Cauchy-Schwartz's inequality and \eqref{fatou}.
Hence, since $\int_0^\infty x^4 e^{rx}\mu(dx) < + \infty$, 
$$\E^\Q_{\eta \circ \mu}[X_{s}^3] = \int_0^\infty (\eta \circ \mu)(dx) \E^\Q_{x}[X_{s}^3]  < + \infty.$$
Thus $\limsup_{t \to \infty}~ t \times\cW_1(\P_\mu[X_{s} \in \cdot | \tau_0 > t], \Q_{\eta \circ \mu}[X_{s} \in \cdot]) < +\infty$. Now, set $g : x \mapsto (1-x) \lor 0 \in \Lip_{1,1}(\R_+)$. By the previous computations \eqref{computation1}, \eqref{computation2}, and by \eqref{q-proc}, one has 
\begin{multline*}\E_\mu[g(X_{s})|\tau_0 > t] - \E^\Q_{\eta \circ \mu}[g(X_{s})] \\ =  \E_\mu\left[\frac{{g(X_{s}) \1_{\tau_0 > s}}}{\P_\mu[\tau_0 >s]} \frac{\eta(X_s)}{\phi_s(\mu)(\eta)}\left( \frac{K_{t-s}Q_{t-s}[\1_{\R_+}/\eta](X_s) - K_{t-s}(\eta \circ \phi_s(\mu))Q_{t-s}[\1_{\R_+}/\eta]}{K_{t-s}(\eta \circ \phi_s(\mu))Q_{t-s}[\1_{\R_+}/\eta]}\right)\right] \\ =\frac{\E^\Q_{\eta \circ \mu}\left[g(X_s) (K_{t-s}Q_{t-s}[\1_{\R_+}/\eta](X_s)- K_{t-s}(\eta \circ \phi_s(\mu))Q_{t-s}[\1_{\R_+}/\eta])\right]}{ K_{t-s}(\eta \circ \phi_s(\mu))Q_{t-s}[\1_{\R_+}/\eta]} 
.\end{multline*}
Using again the notation $\psi_\mu$ in \eqref{notation}, by what we showed in the proof of theorem \ref{theorem},
$$K_{t-s}(\eta \circ \phi_s(\mu))Q_{t-s}[\1_{\R_+}/\eta] = 1 - \frac{1}{t-s} \psi_{\phi_s(\mu)}(\1_{\R_+}) + o\left(\frac{1}{t-s}\right).$$  
Likewise, for any $y > 0$, and using \eqref{suite} for the third line,
\begin{multline*} K_{t-s}Q_{t-s}[\1_{\R_+}/\eta](y) - K_{t-s}(\eta \circ \phi_s(\mu))Q_{t-s}[\1_{\R_+}/\eta] \\=  \frac{1}{t-s} [\psi_{\phi_s(\mu)}(\1_{\R_+}) - \psi_{\delta_{y}}(\1_{\R_+})]+ o\left(\frac{1}{t-s}\right) \\ = \frac{1}{2(t-s)} \left[\int_0^\infty x^2 (\eta \circ \phi_s(\mu)) (dx) -  y^2 \right]+ o\left(\frac{1}{t-s}\right) \\= \frac{1}{2(t-s)} \left[\E^\Q_{\eta \circ \mu}[X^2_s] -  y^2 \right]+ o\left(\frac{1}{t-s}\right),\end{multline*}
where the last line is obtained from $\mu P_s \eta = e^{- \lambda_0 s} \mu(\eta)$, which implies that
$$\int_0^\infty x^2 (\eta \circ \phi_s(\mu))(dx) = \frac{\E_\mu[X_s^2 \eta(X_s) | \tau_0 > s]}{\E_\mu[\eta(X_s) | \tau_0 > s]} = \frac{\E_\mu[X_s^2 \eta(X_s) \1_{\tau_0 > s}]}{e^{- \lambda_0 s} \mu(\eta)} = \E_{\eta \circ \mu}^\Q[X_s^2].$$
Then, by Lebesgue's theorem (using \eqref{lebesgue}), one has
$$\lim_{t \to + \infty}2t \times \left(\E_\mu[g(X_{s})|\tau_0 > t] - \E^\Q_{\eta \circ \mu}[g(X_{s})]\right) = \E^\Q_{\eta \circ \mu}[g(X_s)]  \E^\Q_{\eta \circ \mu}[X_s^2]- \E^\Q_{\eta \circ \mu}[g(X_s)X_s^2]. $$
Since $g(X_s) = 0$ if and only if $X_s \geq 1$, one has 
$$\E_{\eta \circ \mu}^\Q[g(X_s)X_s^2] \leq \E_{\eta \circ \mu}^\Q[g(X_s)].$$
Since $X_t \underset{t \to + \infty}{\longrightarrow} + \infty$ $\Q_{0}$-almost surely, there exists $s_0 > 0$ such that, for any $s \geq s_0$ and $\mu \in \cM_1((0,+\infty))$, $\E^\Q_{\eta \circ \mu}[X_s^2] > 1$. Then, claiming moreover that $\E_{\eta \circ \mu}^\Q[g(X_s)] > 0$ for any $\mu \in \cM_1((0,+\infty))$ satisfying $\int_0^\infty x^4e^{rx} \mu(dx) < + \infty$ and $s \geq s_0$, one obtains
$$\E^\Q_{\eta \circ \mu}[g(X_s)X_s^2] < \E^\Q_{\eta \circ \mu}[g(X_s)] \E^\Q_{\eta \circ \mu}[X^2_s].$$  
This allows to conclude that $\liminf_{t \to + \infty} t \times \cW_1(\P_{\mu}[X_s \in \cdot | \tau_0 > t], (\eta \circ \mu) Q_s) > 0$. 
\end{proof}

\bibliographystyle{abbrv}
\bibliography{biblio-william}

\begin{thebibliography}{10}

\bibitem{CV2014}
N.~Champagnat and D.~Villemonais.
\newblock Exponential convergence to quasi-stationary distribution and
  {$Q$}-process.
\newblock {\em Probability Theory and Related Fields}, 164(1-2):243--283, 2016.

\bibitem{CV2017c}
N.~Champagnat and D.~Villemonais.
\newblock General criteria for the study of quasi-stationarity.
\newblock {\em arXiv preprint arXiv:1712.08092}, 2017.

\bibitem{CMSM}
P.~Collet, S.~Mart\'{i}nez, and J.~San~Martin.
\newblock {\em Quasi-stationary distributions}.
\newblock Probability and its Applications (New York). Springer, Heidelberg,
  2013.
\newblock Markov chains, diffusions and dynamical systems.

\bibitem{diaconis2019analytic}
P.~Diaconis, K.~Houston-Edwards, and L.~Saloff-Coste.
\newblock Analytic-geometric methods for finite {M}arkov chains with
  applications to quasi-stationarity.
\newblock {\em arXiv preprint arXiv:1906.04877}, 2019.

\bibitem{DM2015}
P.~Diaconis and L.~Miclo.
\newblock On quantitative convergence to quasi-stationarity.
\newblock {\em Ann. Fac. Sci. Toulouse Math. (6)}, 24(4):973--1016, 2015.

\bibitem{ferre2018more}
G.~Ferr{\'e}, M.~Rousset, and G.~Stoltz.
\newblock More on the long time stability of {F}eynman-{K}ac semigroups.
\newblock {\em arXiv preprint arXiv:1807.00390}, 2018.

\bibitem{MPSM98}
S.~Martinez, P.~Picco, and J.~San~Martin.
\newblock Domain of attraction of quasi-stationary distributions for the
  {B}rownian motion with drift.
\newblock {\em Advances in Applied Probability}, 30(2):385--408, 1998.

\bibitem{MSM1994}
S.~Mart\'{i}nez and J.~San~Martin.
\newblock Quasi-stationary distributions for a {B}rownian motion with drift and
  associated limit laws.
\newblock {\em Journal of applied probability}, 31(4):911--920, 1994.

\bibitem{MV2012}
S.~M\'el\'eard and D.~Villemonais.
\newblock Quasi-stationary distributions and population processes.
\newblock {\em Probab. Surv.}, 9:340--410, 2012.

\bibitem{ocafrain2020}
W.~O{\c{c}}afrain.
\newblock Convergence to quasi-stationarity through {P}oincaré inequalities
  and {B}akry-{E}mery criteria.
\newblock {\em arXiv preprint arXiv:2001.07794}, 2020.

\bibitem{PV2017}
Z.~Palmowski and M.~Vlasiou.
\newblock Speed of convergence to the quasi-stationary distribution for {L}évy
  input fluid queues.
\newblock {\em arXiv preprint arXiv:1711.04041}, 2017.

\bibitem{PR2012}
M.~Polak and T.~Rolski.
\newblock A note on speed of convergence to the quasi-stationary distribution.
\newblock {\em Demonstratio Mathematica}, 45(2):385--397, 2012.

\bibitem{V2018}
A.~Velleret.
\newblock Unique quasi-stationary distribution, with a possibly stabilizing
  extinction.
\newblock {\em arXiv preprint arXiv:1802.02409}, 2018.

\end{thebibliography}



\ACKNO{I am very thankful to the anonymous referees for the relevant remarks that they did for the improvement of this note.}


\end{document}